\newtheorem{theorem}{Theorem}
\newtheorem{lemma}{Lemma}
\newtheorem{observation}{Observation}
\newcommand{\N}{\mathbb N}
\newcommand{\Z}{\mathbb Z}
\newcommand{\K}{\textbf{\textit{K}}}
\begin{document}
\noindent
\centerline{{\large Does there exist an algorithm which to each Diophantine equation}}
\vskip 0.2truecm
\noindent
\centerline{{\large assigns an integer which is greater than the number (heights)}}
\vskip 0.2truecm
\noindent
\centerline{{\large of integer solutions, if these solutions form a finite set?}}
\vskip 0.6truecm
\noindent
\centerline{{\large Apoloniusz Tyszka}}
\vskip 0.6truecm
\begin{sloppypar}
\noindent
{\bf Abstract.} Let \mbox{$E_n=\{x_i=1,~x_i+x_j=x_k,~x_i \cdot x_j=x_k: i,j,k \in \{1,\ldots,n\}\}$}.
If Matiyasevich's conjecture on finite-fold Diophantine representations is true, then for every
computable function \mbox{$f:\N \to \N$} there is a positive integer $m(f)$ such that for
each integer \mbox{$n \geq m(f)$} there exists a system \mbox{$S \subseteq E_n$} which has
at least $f(n)$ and at most finitely many solutions in integers \mbox{$x_1,\ldots,x_n$}.
This conclusion contradicts to the author's conjecture on integer arithmetic,
which implies that the heights of integer solutions to a Diophantine equation are computably bounded,
if these solutions form a finite set.
\vskip 0.6truecm
\noindent
{\bf Key words and phrases:} computable function, Davis-Putnam-Robinson-Matiyasevich theorem,
finite-fold Diophantine representation, Matiyasevich's conjecture, system of Diophantine equations.
\end{sloppypar}
\vskip 0.6truecm
\noindent
{\bf 2010 Mathematics Subject Classification:} 03D20, 11D45, 11D72, 11U99.
\vskip 0.6truecm
\noindent
{\large 1.~~Introduction}
\vskip 0.4truecm
\par
The heights of integer solutions to a Diophantine equation
\begin{equation}\label{equ1}
ax^2+bxy+cy^2+dx+ey+f=0
\end{equation}
are bounded from above by $20 \cdot \left({\rm max}(|a|,|b|,|c|,|d|,|e|,|f|)\right)^4$,
if equation~(\ref{equ1}) has at most finitely many integer solutions, see \cite{Schinzel},
\mbox{\cite[p.~17,~Theorem~1.14]{Narkiewicz}}, and \cite{Niven}.
Every Diophantine equation of degree at most $n$ has the form
\begin{equation}\label{equ2}
\sum_{\stackrel{\textstyle i_1,\ldots,i_k \in \{0,\ldots,n\}}{\textstyle i_1+\ldots+i_k \leq n}}
a(i_1,\ldots,i_k) \cdot x_1^{\textstyle i_1} \cdot \ldots \cdot x_k^{\textstyle i_k}=0
\end{equation}
where $a(i_1,\ldots,i_k)$ denote integers.
\vskip 0.2truecm
\begin{observation}\label{obs1}
For each positive integer $b$, there are at most finitely many equations~(\ref{equ2}) which satisfy
\[
{\rm max}\Bigl(\{k,n\} \cup \Bigl\{|a(i_1,\ldots,i_k)|:~\bigl(i_1,\ldots,i_k \in \{0,\ldots,n\}\bigr) \wedge \bigl(i_1+\ldots+i_k \leq n\bigr)\Bigr\}\Bigl) \leq b
\]
\end{observation}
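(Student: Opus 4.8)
The plan is a direct count of the finitely many parameters that an equation of the form~(\ref{equ2}) depends on. Such an equation is determined by the number of variables $k$, the degree bound $n$, and the integer coefficients $a(i_1,\ldots,i_k)$, where $(i_1,\ldots,i_k)$ ranges over the tuples with $i_1,\ldots,i_k \in \{0,\ldots,n\}$ and $i_1+\ldots+i_k \le n$. Once $b$ is fixed, the displayed inequality forces $k \le b$ and $n \le b$, so $(k,n)$ takes at most $b^2$ values.

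For each admissible pair $(k,n)$, the index set $I_{k,n} := \bigl\{(i_1,\ldots,i_k):~ i_1,\ldots,i_k \in \{0,\ldots,n\},~ i_1+\ldots+i_k \le n\bigr\}$ is finite with $|I_{k,n}| \le (n+1)^k \le (b+1)^b$, and each coefficient $a(i_1,\ldots,i_k)$ is an integer in $\{-b,\ldots,b\}$, hence has at most $2b+1$ possible values. So the number of coefficient arrays compatible with $(k,n)$ and with the bound $b$ is at most $(2b+1)^{|I_{k,n}|} \le (2b+1)^{(b+1)^b}$. Summing over the at most $b^2$ pairs $(k,n)$ shows that at most $b^2 \cdot (2b+1)^{(b+1)^b}$ equations~(\ref{equ2}) satisfy the inequality --- in particular finitely many. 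Since each triple $\bigl(k,n,(a(i_1,\ldots,i_k))\bigr)$ yields at most one polynomial, this is a genuine upper bound on the number of distinct equations.

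There is no serious obstacle: the argument is pure bookkeeping. The only place to be slightly careful is the convention on the ranges of $k$ and $n$ and the exact count of monomials for a given $(k,n)$; but because we only need a finite upper bound, the crude estimates $|I_{k,n}| \le (b+1)^b$ and ``at most $2b+1$ values per coefficient'' are more than enough, so no optimisation is required.
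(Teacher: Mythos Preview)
Your argument is correct: bounding $k$, $n$, and each coefficient by $b$ leaves only finitely many choices, and your explicit count $b^2 \cdot (2b+1)^{(b+1)^b}$ is a valid (if crude) upper bound. The paper itself gives no proof at all --- the statement is recorded as an ``Observation'' and treated as self-evident --- so your write-up is strictly more detailed than what appears there.
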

We state the following double problem: {\em Does there exist a computable function of
\[
{\rm max}\Bigl(\{k,n\} \cup \Bigl\{|a(i_1,\ldots,i_k)|:~\bigl(i_1,\ldots,i_k \in \{0,\ldots,n\}\bigr) \wedge \bigl(i_1+\ldots+i_k \leq n\bigr)\Bigr\}\Bigl)
\]
which bounds the number (heights) of integer solution to equation~(\ref{equ2}), if these solutions form a finite set?}
\vskip 0.2truecm
\par
The existence of such bounds is discussed in this article. By Observation~\ref{obs1}, the stated problem is equivalent to the
double problem from the title of the article.
\vskip 0.4truecm
\noindent
{\large 2.~~Small systems of Diophantine equations with a large number of integer solutions}
\vskip 0.4truecm
\par
Let $E_n=\{x_i=1,~x_i+x_j=x_k,~x_i \cdot x_j=x_k: i,j,k \in \{1,\ldots,n\}\}$. The following system
\begin{displaymath}
\left\{
\begin{array}{rcl}
x_1 \cdot x_1 &=& x_1 \\
&\ldots& \\
x_n \cdot x_n &=&x_n
\end{array}
\right.
\end{displaymath}
has exactly $2^n$ solutions in integers $x_1,\ldots,x_n$. If $n \geq 10$, then $1156 \cdot 2^{n-10}>2^n$
and there is a simply defined system $S \subseteq E_n$ which has exactly $1156 \cdot 2^{n-10}$ solutions in integers
$x_1,\ldots,x_n$, see \cite{Browkin}. We strengthen this results assuming an old conjecture due to Yu. Matiyasevich.
\vskip 0.2truecm
\par
The Davis-Putnam-Robinson-Matiyasevich theorem states that every recursively enumerable
set \mbox{${\cal M} \subseteq {\N}^n$} has a Diophantine representation, that is
\begin{equation}
\tag*{\tt (R)}
(a_1,\ldots,a_n) \in {\cal M} \Longleftrightarrow
\exists x_1, \ldots, x_m \in \N ~~W(a_1,\ldots,a_n,x_1,\ldots,x_m)=0
\end{equation}
for some polynomial $W$ with integer coefficients, see \cite{Matiyasevich1} and \cite{Kuijer}.
The polynomial~$W$ can be computed, if we know a Turing machine~$M$
such that, for all \mbox{$(a_1,\ldots,a_n) \in {\N}^n$}, $M$ halts on \mbox{$(a_1,\ldots,a_n)$} if and only if
\mbox{$(a_1,\ldots,a_n) \in {\cal M}$}, see \cite{Matiyasevich1} and \cite{Kuijer}.
\newpage
\begin{sloppypar}
The representation~{\tt (R)} is said to be \mbox{finite-fold} if for any \mbox{$a_1,\ldots,a_n \in \N$} the equation
\mbox{$W(a_1,\ldots,a_n,x_1,\ldots,x_m)=0$} has at most finitely many solutions \mbox{$(x_1,\ldots,x_m) \in {\N}^m$}.
Yu.~Matiyasevich conjectures that each recursively enumerable set \mbox{${\cal M} \subseteq {\N}^n$}
has a \mbox{finite-fold} Diophantine representation, see \mbox{\cite[pp.~341--342]{Davis}},
\mbox{\cite[p.~42]{Matiyasevich2}} and \mbox{\cite[p.~79]{Matiyasevich3}}.
\end{sloppypar}
\vskip 0.2truecm
\par
Before the main Theorem~\ref{the1}, we need an algebraic lemma together with introductory matter.
\vskip 0.2truecm
\begin{sloppypar}
Let \mbox{$D(x_1,\ldots,x_p) \in {\Z}[x_1,\ldots,x_p]$}. For the Diophantine equation \mbox{$2 \cdot D(x_1,\ldots,x_p)=0$},
let $M$ denote the maximum of the absolute values of its coefficients.
Let ${\cal T}$ denote the family of all polynomials
$W(x_1,\ldots,x_p) \in {\Z}[x_1,\ldots,x_p]$ whose all coefficients belong to the interval $[-M,M]$
and ${\rm deg}(W,x_i) \leq d_i={\rm deg}(D,x_i)$ for each $i \in \{1,\ldots,p\}$.
Here we consider the degrees of $W(x_1,\ldots,x_p)$ and $D(x_1,\ldots,x_p)$
with respect to the variable~$x_i$.
\vskip 0.2truecm
\par
We choose any bijection \mbox{$\tau: \{p+1,\ldots,{\rm card}({\cal T})\} \longrightarrow {\cal T} \setminus \{x_1,\ldots,x_p\}$}.
Let ${\cal H}$ denote the family of all equations of the forms
\vskip 0.2truecm
\noindent
\centerline{$x_i=1$, $x_i+x_j=x_k$, $x_i \cdot x_j=x_k$~~($i,j,k \in \{1,\ldots,{\rm card}({\cal T})\})$}
\vskip 0.2truecm
\noindent
which are polynomial identities in \mbox{${\Z}[x_1,\ldots,x_p]$} if
\[
\forall s \in \{p+1,\ldots,{\rm card}({\cal T})\} ~~x_s=\tau(s)
\]
There is a unique \mbox{$q \in \{p+1,\ldots,{\rm card}({\cal T})\}$} such that \mbox{$\tau(q)=2 \cdot D(x_1,\ldots,x_p)$}.
For each ring $\K$ extending $\Z$ the system ${\cal H}$ implies \mbox{$2 \cdot D(x_1,\ldots,x_p)=x_q$}.
To see this, we observe that there exist pairwise distinct
\mbox{$t_0,\ldots,t_m \in {\cal T}$} such that $m>p$ and
\[
t_0=1~ \wedge ~t_1=x_1~ \wedge ~\ldots~ \wedge ~t_p=x_p~ \wedge ~t_m=2 \cdot D(x_1,\ldots,x_p)~ \wedge
\]
\[
\forall i \in \{p+1,\ldots,m\}~ \exists j,k \in \{0,\ldots,i-1\} ~~(t_j+t_k=t_i \vee t_i+t_k=t_j \vee t_j \cdot t_k=t_i)
\]
For each ring $\K$ extending $\Z$ and for each \mbox{$x_1,\ldots,x_p \in \K$}
there exists a unique tuple \mbox{($x_{p+1},\ldots,x_{{\rm card}({\cal T})}) \in \K^{{\rm card}({\cal T})-p}$}
such that the tuple \mbox{$(x_1,\ldots,x_p,x_{p+1},\ldots,x_{{\rm card}({\cal T})})$}
solves the system \mbox{${\cal H}$}. The sought elements \mbox{$x_{p+1},\ldots,x_{{\rm card}({\cal T})}$}
are given by the formula
\[
\forall s \in \{p+1,\ldots,{\rm card}({\cal T})\} ~~x_s=\tau(s)(x_1,\ldots,x_p)
\]
\newpage
\begin{lemma}\label{lem1}
The system ${\cal H} \cup \{x_q+x_q=x_q\}$ can be simply computed.
For each ring $\K$ extending $\Z$, the equation $D(x_1,\ldots,x_p)=0$
is equivalent to the system ${\cal H} \cup \{x_q+x_q=x_q\} \subseteq E_{{\rm card}({\cal T})}$.
Formally, this equivalence can be written as
\[
\forall x_1,\ldots,x_p \in \K ~\Bigl(D(x_1,\ldots,x_p)=0 \Longleftrightarrow
\exists x_{p+1},\ldots,x_{{\rm card}({\cal T})} \in \K
\]
\[
(x_1,\ldots,x_p,x_{p+1},\ldots,x_{{\rm card}({\cal T})}) {\rm ~solves~the~system~}
{\cal H} \cup \{x_q+x_q=x_q\} \Bigr)
\]
For each ring $\K$ extending $\Z$ and for each \mbox{$x_1,\ldots,x_p \in \K$} with
\mbox{$D(x_1,\ldots,x_p)=0$} there exists a unique tuple
\mbox{($x_{p+1},\ldots,x_{{\rm card}({\cal T})}) \in \K^{{\rm card}({\cal T})-p}$} such
that the tuple \mbox{$(x_1,\ldots,x_p,x_{p+1},\ldots,x_{{\rm card}({\cal T})})$} solves the system
\mbox{${\cal H} \cup \{x_q+x_q=x_q\}$}. Hence, for each ring $\K$ extending $\Z$ the equation
\mbox{$D(x_1,\ldots,x_p)=0$} has the same number of solutions as the system \mbox{${\cal H} \cup \{x_q+x_q=x_q\}$}.
\end{lemma}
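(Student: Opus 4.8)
The plan is to assemble the lemma from the material built up immediately before its statement, so that only the computability claim and a short piece of bookkeeping remain to be checked.

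First I would settle the computability assertion. As in Observation~\ref{obs1}, the family ${\cal T}$ is finite: once $M$ and $d_1,\ldots,d_p$ have been read off the coefficients of $2\cdot D(x_1,\ldots,x_p)$, there are only finitely many polynomials in ${\Z}[x_1,\ldots,x_p]$ with all coefficients in $[-M,M]$ and degree at most $d_i$ in each $x_i$. An algorithm can therefore list ${\cal T}$, fix some bijection $\tau$, and then, for each of the finitely many candidate equations $x_i=1$, $x_i+x_j=x_k$, $x_i\cdot x_j=x_k$ with $i,j,k\in\{1,\ldots,{\rm card}({\cal T})\}$, decide by a routine comparison of polynomials whether it becomes an identity after the substitution $x_s=\tau(s)$ for $s>p$; the equations that pass form ${\cal H}$, and $q$ is located as the unique index with $\tau(q)=2\cdot D(x_1,\ldots,x_p)$. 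Since every equation of ${\cal H}\cup\{x_q+x_q=x_q\}$ has one of the three admissible shapes and uses indices in $\{1,\ldots,{\rm card}({\cal T})\}$, it lies in $E_{{\rm card}({\cal T})}$.

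For the equivalence, the uniqueness, and the count I would invoke the two facts established just above the lemma: for every ring ${\K}$ extending ${\Z}$ and every $x_1,\ldots,x_p\in{\K}$ there is exactly one tuple $(x_{p+1},\ldots,x_{{\rm card}({\cal T})})$ solving ${\cal H}$, namely $x_s=\tau(s)(x_1,\ldots,x_p)$; and ${\cal H}$ forces $2\cdot D(x_1,\ldots,x_p)=x_q$. Adjoining the single equation $x_q+x_q=x_q$, which in any ring says $x_q=0$, I get that a tuple $(x_1,\ldots,x_{{\rm card}({\cal T})})$ solves ${\cal H}\cup\{x_q+x_q=x_q\}$ exactly when $x_s=\tau(s)(x_1,\ldots,x_p)$ for all $s>p$ and $2\cdot D(x_1,\ldots,x_p)=0$, i.e.\ $D(x_1,\ldots,x_p)=0$. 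This is the displayed equivalence; it also shows that for each $(x_1,\ldots,x_p)$ with $D(x_1,\ldots,x_p)=0$ the completing tuple exists and is unique, so that projection onto the first $p$ coordinates is a bijection from the solution set of ${\cal H}\cup\{x_q+x_q=x_q\}$ onto the zero set of $D$, whence the two have the same number of solutions.

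The substantive part is the fact quoted from the paragraphs preceding the lemma — that ${\cal H}$ is already rich enough to pin down every $x_s$ with $s>p$. If this had to be proved from scratch, the crux would be producing, for each polynomial in ${\cal T}\setminus\{x_1,\ldots,x_p\}$, a construction $t_0=1,\,t_1=x_1,\,\ldots,\,t_p=x_p,\,t_{p+1},\,\ldots,\,t_m$ with pairwise distinct entries lying in ${\cal T}$ in which each new term is obtained from two earlier ones by $t_j+t_k=t_i$, $t_i+t_k=t_j$, or $t_j\cdot t_k=t_i$; every such step is a polynomial identity, hence an equation of ${\cal H}$, and the distinctness of the $t_i$ lets the values $x_{\tau^{-1}(t_i)}=t_i(x_1,\ldots,x_p)$ be read off inductively with no circularity. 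Working with $2\cdot D$ in place of $D$ is precisely what keeps the target polynomial out of $\{1,x_1,\ldots,x_p\}$ (its coefficients are even, while $1$ and each $x_i$ have an odd coefficient), so that a chain ending in a fresh final term is always available. Granting that construction, which the excerpt supplies, the lemma follows from the bookkeeping above.
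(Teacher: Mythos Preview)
Your proposal is correct and follows exactly the route the paper itself takes: the lemma carries no separate proof in the paper, and you have simply assembled the assertions established in the paragraphs immediately preceding it (the unique extension of $(x_1,\ldots,x_p)$ to a solution of ${\cal H}$, and the implication ${\cal H}\Rightarrow x_q=2D(x_1,\ldots,x_p)$), adding the evident observation that $x_q+x_q=x_q$ forces $x_q=0$. Your explicit remarks on computability and on why one works with $2D$ rather than $D$ (to keep the target polynomial out of $\{x_1,\ldots,x_p\}$ and so guarantee $q>p$) go slightly beyond what the paper spells out, but are accurate and in the same spirit.
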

\end{sloppypar}
\par
Putting $M=M/2$ we obtain new families ${\cal T}$ and ${\cal H}$.
There is a unique $q \in \{1,\ldots,{\rm card}({\cal T})\}$ such that
\[
\Bigl(q \in \{1,\ldots,p\}~ \wedge ~x_q=D(x_1,\ldots,x_p)\Bigr)~ \vee
\]
\[
\Bigl(q \in \{p+1,\ldots,{\rm card}({\cal T})\}~ \wedge ~\tau(q)=D(x_1,\ldots,x_p)\Bigr)
\]
The new system \mbox{${\cal H} \cup \{x_q+x_q=x_q\}$} is equivalent to \mbox{$D(x_1,\ldots,x_p)=0$}
and can be simply computed.
\begin{theorem}\label{the1}
If Matiyasevich's conjecture is true, then for every
computable function \mbox{$f:\N \to \N$} there is a positive integer $m(f)$ such that for
each integer \mbox{$n \geq m(f)$} there exists a system \mbox{$S \subseteq E_n$} which has
at least $f(n)$ and at most finitely many solutions in integers \mbox{$x_1,\ldots,x_n$}.
\end{theorem}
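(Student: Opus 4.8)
The plan is to build, for a given computable $f$, a single Diophantine equation whose integer (or natural-number) solution set is finite but provably large, and then translate it into a subsystem of $E_n$ via Lemma~\ref{lem1}. The starting point is Matiyasevich's conjecture: consider the recursively enumerable set $\mathcal{M}=\{(a,b)\in\N^2:\ b=f(a)\}$, which is the graph of a computable function and hence r.e. Under the conjecture, $\mathcal{M}$ has a \emph{finite-fold} Diophantine representation, so there is a polynomial $W(a,b,x_1,\ldots,x_m)$ with integer coefficients such that for all $a,b\in\N$, $b=f(a)$ iff $\exists x_1,\ldots,x_m\in\N\ W(a,b,x_1,\ldots,x_m)=0$, and for each fixed $(a,b)$ this equation has only finitely many solutions in $(x_1,\ldots,x_m)\in\N^m$. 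Because the graph of $f$ is single-valued, substituting a concrete numeral for $a$ pins down $b=f(a)$ uniquely, so the number of natural-number solutions of $W(a,f(a),x_1,\ldots,x_m)=0$ is finite and, crucially, at least $1$ for every $a$.

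**Next I would** arrange for the solution count to \emph{grow}. Fixing $a$ does not by itself force many solutions, so instead I encode into the equation an auxiliary block that independently contributes exactly $f(a)$ solutions. Concretely, I would work with a system asserting $b=f(a)$ together with the constraint $1\le y\le b$ on a fresh variable $y$; since $b=f(a)$ is forced, the variable $y$ then ranges over exactly $f(a)$ values, multiplying the (finite, nonzero) number of solutions of the $W$-block by $f(a)$. Translating ``$\exists y\ (1\le y\le b)$'' into the DPRM framework over $\N$ is routine (e.g.\ $y-1\in\N$ and $b-y\in\N$, or a sum-of-four-squares style bound), and this keeps the whole system finite-fold. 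I would also handle the shift between $\N$-solutions and $\Z$-solutions: either build the representation so that all variables are squares (replacing each $x_i$ by $x_i^2$, at the cost of at most a bounded multiplicative blow-up in the count, which only helps the lower bound after absorbing it into $m(f)$), or invoke the standard fact that any Diophantine equation over $\N$ can be rewritten as one over $\Z$ with a controlled, finite change in the number of solutions.

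**Then** I would apply Lemma~\ref{lem1}. Collapsing the polynomial equation $D(x_1,\ldots,x_p)=0$ encoding the above into a single polynomial $D$, the lemma produces a system $\mathcal{H}\cup\{x_q+x_q=x_q\}\subseteq E_N$, where $N=\mathrm{card}(\mathcal{T})$ depends only on $D$ (hence only on $f$ and the chosen value $a$), with \emph{exactly the same number of solutions} over any ring extending $\Z$ — in particular over $\Z$. So for the particular numerical choice $a=a_0$ we get a fixed system $S_0\subseteq E_{N_0}$ with finitely many but at least $f(a_0)$ integer solutions. To get the theorem's uniform statement for all $n\ge m(f)$, I would instead let $a$ be the free variable $x_1$ of the equation (not a substituted numeral) and add the auxiliary constraint $x_1=n-$(number of padding variables), or more simply pad the system up to $E_n$ by adjoining fresh variables $z$ each constrained by $z\cdot z=z$ (contributing a harmless factor of $2$ per variable) while choosing inside the construction a value of $a$ as a function of $n$ that makes $f(a)\le$ (actual count) $\le$ finite and the system fit in $E_n$. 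Setting $m(f)$ to be the least $n$ for which the base construction fits into $E_n$, and using that $f$ (wlog nondecreasing, or replaced by $n\mapsto\max_{k\le n}f(k)$, still computable) lets the chosen parameter keep pace, yields a system $S\subseteq E_n$ with at least $f(n)$ and at most finitely many integer solutions.

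**The main obstacle** I anticipate is not the encoding of $b=f(a)$ — that is exactly what DPRM plus the finite-fold conjecture delivers — but the bookkeeping that simultaneously (i) keeps the representation finite-fold after adding the ``$y$ ranges over $1,\ldots,b$'' gadget and after the $\N\to\Z$ passage, (ii) makes the number of variables $N=\mathrm{card}(\mathcal{T})$ in Lemma~\ref{lem1} depend on $n$ in a way that lets us land in $E_n$ for \emph{every} $n\ge m(f)$ rather than only for a sparse set of $n$, and (iii) ensures the lower bound is $f(n)$ and not $f$ of some much smaller argument. Item (ii) is the delicate part: I would resolve it by observing that padding with idempotent equations $z\cdot z=z$ lets one increase $N$ by $1$ at a time (doubling the count, which only strengthens the inequality), so once the construction works for some $n_0$ it works for all $n\ge n_0$, and by choosing the internal parameter $a$ as a monotone function of the available room $n$ so that $f(a)\ge f(n)$ is arranged — or, cleanly, by noting it suffices to prove the weaker-looking statement with $f$ replaced by its ``record'' function and with $m(f)$ absorbing all slack, which is what the theorem actually asserts.
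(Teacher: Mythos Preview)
Your overall architecture is right --- finite-fold representation of (something built from) $f$, a gadget that manufactures at least $f(n)$ solutions, then Lemma~\ref{lem1} --- and it matches the paper's. But the step you yourself flag as ``delicate,'' item~(ii), is not actually resolved by what you wrote, and this is where the paper does real work that your sketch is missing.

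Concretely: your proposed fix ``pad with $z\cdot z=z$, so once the construction works for some $n_0$ it works for all $n\ge n_0$'' is false as stated. Padding from $E_{n_0}$ to $E_n$ multiplies the count by $2^{\,n-n_0}$, so you would need $f(n)\le 2^{\,n-n_0}\cdot(\text{count at }n_0)$, which fails for any computable $f$ growing faster than $2^n$. Your alternative, ``choose the internal parameter $a$ as a function of the available room $n$,'' is the right instinct, but you never say how $a$ gets pinned down inside the system. If you substitute a numeral $a_0$ into $W$ before applying Lemma~\ref{lem1}, the coefficients of the resulting polynomial --- and hence ${\rm card}({\cal T})$ --- blow up with $a_0$, so you lose control of whether the system lands in $E_n$. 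If instead $a$ stays a free variable and is not constrained, the system has solutions for every $a\in\N$ and is no longer finite-fold. The phrase ``add the auxiliary constraint $x_1=n-(\text{padding})$'' hides exactly the missing mechanism: how do you write ``$x_1$ equals this particular integer'' using equations from $E_n$, and how many variables does that cost?

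The paper's answer is to \emph{build the integer $n$ inside the system by a chain of additions}: set $t_1=1$, $t_{i}+t_1=t_{i+1}$ up to $t_{[n/2]}$, then $t_{[n/2]}+t_{[n/2]}=w$ and $w+y=x_2$ with $y$ forced to $0$ or $1$ according to parity. This uses $[n/2]+O(1)$ variables to force $x_2=n$ exactly, leaving the other half of the $n$ variables for the fixed-size block $\Psi$ (which represents the graph of $n\mapsto f(n)!$, not of $f$), for two fresh variables $u,v$, and for $z_i=1$ padding. The counting gadget is then the single equation $u\cdot v=x_1$: since $x_1$ is forced to equal $f(n)!$, every $u\in\{1,\ldots,f(n)\}$ divides it, giving at least $f(n)$ and only finitely many integer pairs $(u,v)$. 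This replaces your range gadget $1\le y\le b$ (which over $\Z$ would itself need several four-square variables) by one equation in two variables, and the factorial trick is what makes the lower bound $f(n)$ rather than merely the number of divisors of $f(n)$.

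In short: your plan becomes a proof once you (a) keep $a$ as a free variable in $\Psi$ so that $s$ is independent of $n$, and (b) force $a=n$ by an additive chain consuming roughly $n/2$ of the variables, then set $m(f)=2s+O(1)$. That is precisely what the paper does; your padding-and-record-function argument does not substitute for it.
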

\begin{proof}
\begin{sloppypar}
\noindent
By Matiyasevich's conjecture, the function $\N \ni n \to f(n)! \in \N$ has a finite-fold Diophantine
representation. It means that there is a polynomial $W(x_1,x_2,x_3,\ldots,x_r)$ with integer coefficients such that
for each non-negative integers $x_1$, $x_2$,
\end{sloppypar}
\begin{equation}
\tag*{\tt (E1)}
x_1=f(x_2)! \Longleftrightarrow \exists x_3, \ldots, x_r \in \N ~~W(x_1,x_2,x_3,\ldots,x_r)=0
\end{equation}
and
\[
{\rm only~finitely~many~tuples~}(x_3,\ldots,x_r) \in {\N}^{r-2} {\rm ~satisfy~} W(x_1,x_2,x_3,\ldots,x_r)=0~~~~{\tt (A)}.
\]
By the equivalence~{\rm (E1)} and Lagrange's four-square theorem, for each integers $x_1$, $x_2$,
the conjunction \mbox{$(x_2 \geq 0) \wedge (x_1=f(x_2)!)$} holds true if and only if there exist integers
\[
a,b,c,d,\alpha,\beta,\gamma,\delta,x_3,x_{3,1},x_{3,2},x_{3,3},x_{3,4},\ldots,x_r,x_{r,1},x_{r,2},x_{r,3},x_{r,4}
\]
such that
\[
W^2(x_1,x_2,x_3,\ldots,x_r)+\bigl(x_1-a^2-b^2-c^2-d^2\bigr)^2+\bigl(x_2-\alpha^2-\beta^2-\gamma^2-\delta^2\bigr)^2+
\]
\[
\bigl(x_3-x^2_{3,1}-x^2_{3,2}-x^2_{3,3}-x^2_{3,4}\bigr)^2+\ldots+\bigl(x_r-x^2_{r,1}-x^2_{r,2}-x^2_{r,3}-x^2_{r,4}\bigr)^2=0
\]
The sentence~{\tt (A)} guarantees that for each integers $x_1$, $x_2$, only finitely many integer tuples
\[
\bigl(a,b,c,d,\alpha,\beta,\gamma,\delta,x_3,x_{3,1},x_{3,2},x_{3,3},x_{3,4},\ldots,x_r,x_{r,1},x_{r,2},x_{r,3},x_{r,4}\bigr)
\]
satisfy the last equality. By Lemma~\ref{lem1}, there is an integer \mbox{$s \geq 3$} such that for each integers $x_1$, $x_2$,
\begin{equation}
\tag*{\tt (E2)}
\Bigl(x_2 \geq 0 \wedge x_1=f(x_2)!\Bigr) \Longleftrightarrow \exists x_3,\ldots,x_s \in \Z ~~\Psi(x_1,x_2,x_3,\ldots,x_s)
\end{equation}
where the formula $\Psi(x_1,x_2,x_3,\ldots,x_s)$ is algorithmically determined as a conjunction of formulae of the forms
\mbox{$x_i=1$}, \mbox{$x_i+x_j=x_k$}, \mbox{$x_i \cdot x_j=x_k$} \mbox{($i,j,k \in \{1,\ldots,s\}$)} and
for each integers \mbox{$x_1,x_2$} at most finitely many integer tuples \mbox{$(x_3,\ldots,x_s)$} satisfy
\mbox{$\Psi(x_1,x_2,x_3,\ldots,x_s)$}.
Let $m(f)=8+2s$, and let $[\cdot]$ denote the integer part function. For each integer $n \geq m(f)$,
\[
n-\left[\frac{n}{2}\right]-4-s \geq m(f)-\left[\frac{m(f)}{2}\right]-4-s \geq m(f)-\frac{m(f)}{2}-4-s=0
\]
Let $S$ denote the following system
\[\left\{
\begin{array}{rcl}
{\rm all~equations~occurring~in~}\Psi(x_1,x_2,x_3,\ldots,x_s) \\
n-\left[\frac{n}{2}\right]-4-s {\rm ~equations~of~the~form~} z_i=1 \\
t_1 &=& 1 \\
t_1+t_1 &=& t_2 \\
t_2+t_1 &=& t_3 \\
&\ldots& \\
t_{\left[\frac{n}{2}\right]-1}+t_1 &=& t_{\left[\frac{n}{2}\right]} \\
t_{\left[\frac{n}{2}\right]}+t_{\left[\frac{n}{2}\right]} &=& w \\
w+y &=& x_2 \\
y+y &=& y {\rm ~(if~}n{\rm ~is~even)} \\
y &=& 1 {\rm ~(if~}n{\rm ~is~odd)} \\
u \cdot v &=& x_1
\end{array}
\right.\]
with $n$ variables. By the equivalence~{\tt (E2)}, the system~$S$ is consistent over $\Z$.
If an integer $n$-tuple $(x_1,x_2,x_3,\ldots,x_s,\ldots,w,y,u,v)$ solves~$S$,
then by the equivalence~{\tt (E2)},
\[
x_1=f(x_2)!=f(w+y)!=f\left(2 \cdot \left[\frac{n}{2}\right]+y\right)!=f(n)!
\]
If $f(n)=0$, then the equation \mbox{$u \cdot v=x_1=f(n)!=1$} has at least $f(n)$ and at most
finitely many solutions in integers \mbox{$u,v$}.
If $f(n) \geq 1$ and \mbox{$u \in \{1,\ldots,f(n)\}$}, then $u$ divides $f(n)!$.
Hence, the equation \mbox{$u \cdot v=x_1=f(n)!$} has at least $f(n)$ and at most finitely many
solutions in integers \mbox{$u,v$}. In both cases, the conclusion transfers to integer solutions of $S$.
\end{proof}
\vskip 0.2truecm
\par
If we do not assume Matiyasevich's conjecture, then the system~$S$
is still consistent over~$\Z$, but may have infinitely many integer solutions. Always,
if an integer \mbox{$n$-tuple} \mbox{$(x_1,x_2,x_3,\ldots,x_s,\ldots,w,y,u,v)$} solves~$S$, then
\mbox{$x_1=f(n)!$}. By choosing a rapidly growing function \mbox{$f:\N \to \N$},
we can guarantee that each integer solution of~$S$ is very large.
\vskip 0.4truecm
\noindent
{\large 3.~~Matiyasevich's conjecture vs the author's conjecture on integer arithmetic}
\vskip 0.4truecm
\par
Matiyasevich's conjecture remains in contradiction to the following Conjecture due to the author, see Theorem~\ref{the2}.
\vskip 0.2truecm
\noindent
{\bf Conjecture}~(\cite{Cipu},~\cite{Tyszka}). {\em If a system $S \subseteq E_n$ has only finitely many solutions in integers \mbox{$x_1,\ldots,x_n$},
then each such solution \mbox{$(x_1,\ldots,x_n)$} satisfies \mbox{$|x_1|,\ldots,|x_n| \leq 2^{\textstyle 2^{n-1}}$}.}
\begin{observation}\label{obs2}
For $n \geq 2$, the bound $2^{\textstyle 2^{n-1}}$ cannot be decreased because the system
\begin{displaymath}
\left\{
\begin{array}{rcl}
x_1+x_1 &=& x_2 \\
x_1 \cdot x_1 &=& x_2 \\
x_2 \cdot x_2 &=& x_3 \\
x_3 \cdot x_3 &=& x_4 \\
&\ldots& \\
x_{n-1} \cdot x_{n-1} &=&x_n
\end{array}
\right.
\end{displaymath}
\noindent
has exactly two integer solutions, namely $(0,\ldots,0)$ and
$\Bigl(2,4,16,256,\ldots,2^{\textstyle 2^{n-2}},2^{\textstyle 2^{n-1}}\Bigr)$.
\end{observation}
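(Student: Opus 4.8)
The plan is to exploit the fact that the system is a \emph{chain}: apart from the first two equations, each equation has the form $x_{k-1} \cdot x_{k-1} = x_k$ and therefore expresses $x_k$ as the square of the preceding variable. Consequently, once $x_1$ is fixed the entire tuple $(x_1,\ldots,x_n)$ is determined by forward substitution, and the whole problem reduces to finding the admissible integer values of $x_1$.

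First I would combine the two equations that involve $x_2$. The equation $x_1+x_1=x_2$ gives $x_2=2x_1$, while $x_1 \cdot x_1=x_2$ gives $x_2=x_1^2$. Equating these yields $x_1^2=2x_1$, that is $x_1(x_1-2)=0$, so over $\Z$ the only possibilities are $x_1=0$ and $x_1=2$. This is the heart of the argument: it forces \emph{at most} two solutions, and everything that follows is verification that each candidate value does extend to a genuine solution.

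Next I would propagate each value forward through the squaring relations. For $x_1=0$ we get $x_2=0$, and then $x_k=x_{k-1}^2=0$ for every $k$ by an immediate induction, giving the all-zero tuple $(0,\ldots,0)$. For $x_1=2$ we get $x_2=4$ and then $x_k=x_{k-1}^2$; I would show by induction on $k$ that $x_k=2^{2^{k-1}}$. The base cases $x_1=2=2^{2^0}$ and $x_2=4=2^{2^1}$ hold, and if $x_{k-1}=2^{2^{k-2}}$ then $x_k=\bigl(2^{2^{k-2}}\bigr)^2=2^{2 \cdot 2^{k-2}}=2^{2^{k-1}}$. In particular $x_n=2^{2^{n-1}}$, which matches the displayed tuple $\bigl(2,4,16,256,\ldots,2^{2^{n-2}},2^{2^{n-1}}\bigr)$. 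Since each equation already has one of the forms permitted in $E_n$, the system lies in $E_n$, it has exactly these two integer solutions, and the second one attains the value $2^{2^{n-1}}$ exactly, so the bound in the Conjecture cannot be lowered.

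There is no real obstacle here; the argument is elementary. The only points requiring care are the forward induction establishing the closed form $x_k=2^{2^{k-1}}$ and the confirmation that each of the two candidate values of $x_1$ genuinely propagates to a solution of the full system, so that the count is \emph{exactly} two rather than merely at most two.
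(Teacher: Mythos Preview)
Your argument is correct and is exactly the natural verification: reduce to $x_1(x_1-2)=0$ from the first two equations, then propagate by repeated squaring and check the closed form $x_k=2^{2^{k-1}}$ by induction. The paper itself offers no proof of this observation---it is simply asserted---so there is nothing further to compare; your write-up supplies the routine details the author left implicit.
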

\newpage
\begin{sloppypar}
\begin{theorem}\label{the2}
The Conjecture formulated for an arbitrary computable bound
\mbox{$\beta: \N \setminus \{0\} \to \N$} instead of the bound \mbox{$\N \setminus \{0\} \ni n \to 2^{\textstyle 2^{n-1}} \in \N$}
remains in contradiction to Matiyasevich's conjecture.
\end{theorem}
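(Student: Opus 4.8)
The plan is to show that Matiyasevich's conjecture forces the $\beta$-version of the Conjecture to fail for \emph{every} fixed computable bound $\beta:\N\setminus\{0\}\to\N$; this is precisely the asserted contradiction. The engine is Theorem~\ref{the1} together with the fact, recorded right after its proof, that every integer solution of the system $S$ produced there satisfies $x_1=f(n)!$.

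Assume Matiyasevich's conjecture and fix a computable $\beta:\N\setminus\{0\}\to\N$. Define $f:\N\to\N$ by $f(0)=1$ and $f(n)=\beta(n)+1$ for $n\geq 1$. Then $f$ is computable, and for every $n\geq 1$ one has $f(n)!=(\beta(n)+1)!\geq\beta(n)+1>\beta(n)$. Apply Theorem~\ref{the1} to this $f$: there is a positive integer $m(f)$ such that for each integer $n\geq m(f)$ there is a system $S\subseteq E_n$ in the variables $x_1,\ldots,x_n$ which has at least $f(n)$ and at most finitely many integer solutions. By the proof of Theorem~\ref{the1} the system $S$ is consistent over $\Z$, so it has at least one integer solution $(x_1,\ldots,x_n)$, and by the paragraph following the proof of Theorem~\ref{the1} (equivalently, by equivalence~{\tt (E2)}) this solution satisfies $x_1=f(n)!$. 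Hence $|x_1|=f(n)!>\beta(n)$.

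Thus, under Matiyasevich's conjecture, for every integer $n\geq m(f)$ there exists a system $S\subseteq E_n$ with only finitely many integer solutions, \emph{each} of which violates $|x_1|\leq\beta(n)$. Since $m(f)$ is a positive integer, such $n$ exist and lie in the domain $\N\setminus\{0\}$ of $\beta$, so this directly contradicts the Conjecture read with $\beta$ in place of $\N\setminus\{0\}\ni n\to 2^{\textstyle 2^{n-1}}\in\N$. Therefore Matiyasevich's conjecture and the $\beta$-version of the Conjecture cannot both hold, which is the claim of the theorem.

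I do not anticipate a genuine obstacle; the argument is a short deduction from Theorem~\ref{the1}. The only points needing care are (i) choosing a computable $f$ whose factorial dominates $\beta$ pointwise, for which $f(n)=\beta(n)+1$ suffices, and (ii) invoking consistency of $S$ to be sure that a witnessing solution with $x_1=f(n)!$ genuinely exists; both are immediate, after which the contradiction with the $\beta$-Conjecture is automatic.
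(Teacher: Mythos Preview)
Your argument is correct, but it proceeds by a different route than the paper. The paper argues via a \emph{count} of solutions: if the $\beta$-Conjecture held, then any $S\subseteq E_n$ with finitely many integer solutions would have at most $(1+2\beta(n))^n$ of them (since each coordinate lies in $[-\beta(n),\beta(n)]$); applying Theorem~\ref{the1} with $f(n)=(1+2\beta(n))^n+1$ then yields, for large $n$, a system with strictly more solutions than this, a contradiction. You instead attack the \emph{height} condition directly, using the extra information recorded after Theorem~\ref{the1} that every integer solution of the constructed system has $x_1=f(n)!$; choosing $f(n)=\beta(n)+1$ makes $|x_1|>\beta(n)$ and contradicts the $\beta$-Conjecture immediately. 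The paper's route uses only the bare statement of Theorem~\ref{the1}, while yours is shorter but leans on the internal structure of the system $S$ built in its proof.
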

\end{sloppypar}
\begin{proof}
Assume that the reformulated Conjecture is true. Then, if a system \mbox{$S \subseteq E_n$} has only finitely many solutions
in integers \mbox{$x_1,\ldots,x_n$}, then the number of solutions does not exceed \mbox{$\left(1+2 \cdot \beta(n)\right)^n$}.
Assume that Matiyasevich's conjecture is true. By applying Theorem~\ref{the1} for \mbox{$f(n)=\left(1+2 \cdot \beta(n)\right)^n+1$},
we conclude that for a sufficiently large value of~$n$, there is a system \mbox{$S \subseteq E_n$} which has at least
\mbox{$\left(1+2 \cdot \beta(n)\right)^n+1$} and at most finitely many solutions in integers \mbox{$x_1,\ldots,x_n$},
a contradiction.
\end{proof}
\vskip 0.4truecm
\noindent
{\large 4.~~On the author's conjecture}
\vskip 0.4truecm
\par
The Conjecture implies that if equation~(\ref{equ2}) has only finitely many solutions in integers
(non-negative integers, rationals), then their heights are bounded from above by
a computable function of
\[
{\rm max}\Bigl(\{k,n\} \cup \Bigl\{|a(i_1,\ldots,i_k)|:~\bigl(i_1,\ldots,i_k \in \{0,\ldots,n\}\bigr) \wedge \bigl(i_1+\ldots+i_k \leq n\bigr)\Bigr\}\Bigl)
\]
see \cite{Tyszka}. 
\vskip 0.2truecm
\par
To each system $S \subseteq E_n$ we assign the system $\widetilde{S}$ defined by
\vskip 0.2truecm
\par
\noindent
\centerline{$\left(S \setminus \{x_i=1:~i \in \{1,\ldots,n\}\}\right) \cup$}
\par
\noindent
\centerline{$\{x_i \cdot x_j=x_j:~i,j \in \{1,\ldots,n\} {\rm ~and~the~equation~} x_i=1 {\rm ~belongs~to~} S\}$}
\vskip 0.2truecm
\par
\noindent
In other words, in order to obtain $\widetilde{S}$ we remove from $S$ each
equation $x_i=1$ and replace it by the following $n$ equations:
\vskip 0.2truecm
\par
\noindent
\centerline{$\begin{array}{rcl}
x_i \cdot x_1 &=& x_1\\
&\ldots& \\
x_i \cdot x_n &=& x_n
\end{array}$}
\begin{lemma}\label{lem2}
For each system $S \subseteq E_n$
\begin{eqnarray*}
\{(x_1,\ldots,x_n) \in {\Z}^n:~(x_1,\ldots,x_n) {\rm ~solves~} \widetilde{S}\} &=& \\
\{(x_1,\ldots,x_n) \in {\Z}^n:~(x_1,\ldots,x_n) {\rm ~solves~} S\} \cup
\{(0,\ldots,0)\}&
\end{eqnarray*}
\end{lemma}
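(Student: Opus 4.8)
The plan is to prove the two inclusions separately: the inclusion $\supseteq$ is routine, while the inclusion $\subseteq$ carries the real content. Throughout, the only feature of $\Z$ that matters is that it is an integral domain, so this is precisely where the restriction to solutions over $\Z$ (as opposed to an arbitrary ring extending $\Z$) enters.

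First I would record two elementary observations. (i) Every equation of $\widetilde{S}$ has one of the forms $x_i+x_j=x_k$ or $x_i \cdot x_j=x_k$; in particular $\widetilde{S}$ contains no equation of the form $x_i=1$, since every such equation of $S$ is deleted in passing to $\widetilde{S}$. As each equation of these two forms is satisfied by the all-zero tuple, $(0,\ldots,0)$ solves $\widetilde{S}$. (ii) For a fixed index $i$, the block $x_i \cdot x_1=x_1,\ldots,x_i \cdot x_n=x_n$ amounts to the conjunction of $(x_i-1)\cdot x_j=0$ over $j\in\{1,\ldots,n\}$; because $\Z$ has no zero divisors, a given integer tuple satisfies this block if and only if $x_i=1$ or $x_1=\cdots=x_n=0$ (distributing the disjunction out of the universal quantifier over $j$).

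For $\supseteq$: the tuple $(0,\ldots,0)$ lies in the left-hand side by (i). If $(x_1,\ldots,x_n)$ solves $S$, then it satisfies every equation of $S$ that is not of the form $x_i=1$, and these are exactly the equations of $\widetilde{S}$ that are not newly introduced; moreover, for each $i$ with the equation $x_i=1$ in $S$ we have $x_i=1$, hence $x_i \cdot x_j=x_j$ for every $j$, so the newly introduced equations hold as well. Thus $(x_1,\ldots,x_n)$ solves $\widetilde{S}$.

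For $\subseteq$: let $(x_1,\ldots,x_n)$ solve $\widetilde{S}$. If it equals $(0,\ldots,0)$ it already lies in the right-hand side, so I may assume it is not the zero tuple and must show it solves $S$. Every equation of $S$ not of the form $x_i=1$ also belongs to $\widetilde{S}$ and is therefore satisfied. For an equation $x_i=1$ of $S$, the associated block lies inside $\widetilde{S}$, so by (ii) either $x_i=1$ or the tuple is zero; the second alternative is excluded, so $x_i=1$. Hence all equations of $S$ hold. Combining the two inclusions yields the asserted set equality. I do not expect a genuine obstacle; the only thing to watch is the degenerate case in which $S$ contains no equation $x_i=1$ at all, where $\widetilde{S}=S$ and the statement collapses to the remark in (i) that $(0,\ldots,0)$ then solves $S$.
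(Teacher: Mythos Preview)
Your argument is correct. The paper actually states Lemma~\ref{lem2} without proof, treating it as an elementary observation; your two-inclusion argument, with the integral-domain reduction of each block $x_i\cdot x_j=x_j$ to the dichotomy ``$x_i=1$ or the tuple is zero,'' is exactly the expected justification and handles the degenerate case $\widetilde{S}=S$ cleanly.
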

\newpage
\noindent
{\bf Corollary.} {\em The Conjecture is equivalent to \mbox{$\forall n \Lambda_n$},
where $\Lambda_n$ denote the statement}
\[
\forall x_1,\ldots,x_n \in \Z ~\exists y_1,\ldots,y_n \in \Z
\]
\[
\bigl(2^{\textstyle 2^{n-1}}<|x_1| \Longrightarrow (|x_1|<|y_1| \vee \ldots \vee |x_1|<|y_n|)\bigr) ~\wedge
\]
\[
\bigl(\forall i,j,k \in \{1,\ldots,n\}~(x_i+x_j=x_k \Longrightarrow y_i+y_j=y_k)\bigr) ~\wedge
\]
\[
\forall i,j,k \in \{1,\ldots,n\}~(x_i \cdot x_j=x_k \Longrightarrow y_i \cdot y_j=y_k)
\]
\begin{sloppypar}
\begin{lemma}\label{lem3}
For all positive integers $n$, $m$ with \mbox{$n \leq m$}, if the statement $\Lambda_n$ fails for
\mbox{$(x_1,\ldots,x_n) \in {\Z}^n$} and \mbox{$2^{\textstyle 2^{m-1}}<|x_1| \leq 2^{\textstyle 2^m}$},
then the statement $\Lambda_m$ fails for \mbox{$(\underbrace{x_1,\ldots,x_1}_{m-n+1 {\rm ~times}},x_2,\ldots,x_n) \in {\Z}^m$}.
\end{lemma}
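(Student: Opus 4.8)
The plan is a proof by contradiction. Assume, in addition to the hypotheses, that $\Lambda_m$ holds for the tuple $\mathbf{x}':=(\underbrace{x_1,\ldots,x_1}_{m-n+1{\rm ~times}},x_2,\ldots,x_n)\in\Z^m$; I will then produce a tuple $(y_1,\ldots,y_n)\in\Z^n$ witnessing that $\Lambda_n$ holds for $(x_1,\ldots,x_n)$, contrary to the assumption that $\Lambda_n$ fails for it. The only part of the size hypothesis I would need is the lower bound: since $n\le m$ we have $2^{2^{m-1}}<|x_1|$, so the antecedent of the first conjunct of $\Lambda_m$ applied to $\mathbf{x}'$ is true, whence any witness $(y'_1,\ldots,y'_m)$ for $\Lambda_m$ at $\mathbf{x}'$ must satisfy $|x_1|<|y'_{i_0}|$ for some $i_0\in\{1,\ldots,m\}$, in addition to honouring every addition and multiplication identity satisfied by $\mathbf{x}'$.

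The structural ingredient is a symmetry of $\mathbf{x}'$. Its coordinates $1,\ldots,m-n+1$ all equal $x_1$, while $x'_{m-n+j}=x_j$ for every $j\in\{1,\ldots,n\}$. Hence, for any $i_0\in\{1,\ldots,m-n+1\}$, the transposition $\sigma:=(i_0\ m-n+1)$ (read as the identity when $i_0=m-n+1$) satisfies $x'_{\sigma(i)}=x'_i$ for all $i$, and such a value-preserving permutation is an automorphism of the ``equation structure'' of $\mathbf{x}'$: if $\circ\in\{+,\cdot\}$ and $x'_a\circ x'_b=x'_c$, then also $x'_{\sigma(a)}\circ x'_{\sigma(b)}=x'_{\sigma(c)}$, so whenever $(y'_1,\ldots,y'_m)$ honours every identity of $\mathbf{x}'$, so does $(y'_{\sigma(1)},\ldots,y'_{\sigma(m)})$. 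Choosing $\sigma$ so as to carry the index $i_0$ into the block $\{m-n+1,\ldots,m\}$ when $i_0\le m-n+1$ (and $\sigma=\mathrm{id}$ when $i_0$ already lies in that block), I may assume after relabelling that $|x_1|<|y'_{m-n+j_1}|$ for some $j_1\in\{1,\ldots,n\}$, with $(y'_1,\ldots,y'_m)$ still honouring every identity of $\mathbf{x}'$.

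Now set $y_j:=y'_{m-n+j}$ for $j\in\{1,\ldots,n\}$. Because $x'_{m-n+j}=x_j$, every identity $x_a\circ x_b=x_c$ with $a,b,c\in\{1,\ldots,n\}$ and $\circ\in\{+,\cdot\}$ is literally the identity $x'_{m-n+a}\circ x'_{m-n+b}=x'_{m-n+c}$ of $\mathbf{x}'$, which $(y'_1,\ldots,y'_m)$ honours; hence $y_a\circ y_b=y_c$. So $(y_1,\ldots,y_n)$ satisfies all the addition and multiplication implications required by $\Lambda_n$ for $(x_1,\ldots,x_n)$, and in addition $|x_1|<|y_{j_1}|$, which makes the first conjunct true regardless of its antecedent. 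Thus $\Lambda_n$ holds for $(x_1,\ldots,x_n)$ — the required contradiction. (If $m=n$ then $\mathbf{x}'=(x_1,\ldots,x_n)$ and there is nothing to prove.)

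The step I expect to require the most care is the symmetry observation of the second paragraph, together with understanding why it is needed: the coordinates $y'_1,\ldots,y'_{m-n+1}$ of the witness are \emph{not} forced to coincide — if $x_1$ appears in no identity $x_1+x_j=x_k$ or $x_1\cdot x_j=x_k$ satisfied by $\mathbf{x}'$, they are mutually unconstrained — so one cannot simply say that all the copies of $x_1$ receive the same $y'$-value. Instead one exploits that two positions of $\mathbf{x}'$ holding equal values are interchangeable and checks that this interchange preserves every addition and multiplication implication, the only mildly delicate point being identities in which an index is repeated (such as $x'_a\circ x'_a=x'_c$ or $x'_a\circ x'_b=x'_a$).
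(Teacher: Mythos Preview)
Your argument is correct. The paper actually states Lemma~\ref{lem3} without proof, so there is nothing to compare your approach against; you have supplied the missing details.

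The heart of the matter is exactly the point you flag in your final paragraph: the $m-n+1$ repeated coordinates of $\mathbf{x}'$ need not receive equal values in a witness $(y'_1,\ldots,y'_m)$, so one cannot simply ``project out'' the extra coordinates. Your remedy via the value-preserving transposition $\sigma=(i_0\ \ m{-}n{+}1)$ is the right one, and your verification that $(y'_{\sigma(1)},\ldots,y'_{\sigma(m)})$ again honours every identity of $\mathbf{x}'$ is sound: if $x'_a\circ x'_b=x'_c$ then $x'_{\sigma(a)}\circ x'_{\sigma(b)}=x'_{\sigma(c)}$ because $\sigma$ preserves the values of $\mathbf{x}'$, and the original witness already honours this latter identity. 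Your observation that only the lower bound $2^{2^{m-1}}<|x_1|$ is used is also correct; the upper bound $|x_1|\le 2^{2^m}$ plays no role in the lemma itself and is included in the statement only with a view to the subsequent reduction from $\Lambda_n$ to $\Psi_n$.
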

\end{sloppypar}
\vskip 0.1truecm
\par
By the Corollary and Lemma~\ref{lem3}, the Conjecture is equivalent to \mbox{$\forall n \Psi_n$},
where $\Psi_n$ denote the statement
\[
\forall x_1,\ldots,x_n \in \Z ~\exists y_1,\ldots,y_n \in \Z
\]
\[
\Bigl(2^{\textstyle 2^{n-1}}<|x_1|={\rm max}\bigl(|x_1|,\ldots,|x_n|\bigr) \leq 2^{\textstyle 2^n}
\Longrightarrow \bigl(|x_1|<|y_1| \vee \ldots \vee |x_1|<|y_n|\bigr)\Bigr) ~\wedge
\]
\[
\Bigl(\forall i,j,k \in \{1,\ldots,n\}~(x_i+x_j=x_k \Longrightarrow y_i+y_j=y_k)\Bigr) ~\wedge
\]
\[
\forall i,j,k \in \{1,\ldots,n\}~(x_i \cdot x_j=x_k \Longrightarrow y_i \cdot y_j=y_k)
\]
In contradistinction to the statements $\Lambda_n$, each true statement~$\Psi_n$ can be confirmed
by a brute-force search in a finite amount of time.
\vskip 0.2truecm
\par
Let $T_n$ denote the set of all integer tuples \mbox{$(a_1,\ldots,a_n)$} for which
there exists a system \mbox{$S \subseteq E_n$} such that \mbox{$(a_1,\ldots,a_n)$} solves $S$
and $S$ has at most finitely many solutions in integers \mbox{$x_1,\ldots,x_n$}.
If \mbox{$(a_1,\ldots,a_n) \in T_n$}, then \mbox{$(a_1,\ldots,a_n)$} solves the system
\begin{displaymath}
\left\{
\begin{array}{rcl}
x_{i} &=& 1~~~~~~({\rm all~} i \in \{1,\ldots,n\} {\rm ~with~} a_i=1) \\
x_i+x_j &=& x_k~~~~~({\rm all~} i,j,k \in \{1,\ldots,n\} {\rm ~with~} a_i+a_j=a_k) \\
x_i \cdot x_j &=& x_k~~~~~({\rm all~} i,j,k \in \{1,\ldots,n\} {\rm ~with~} a_i \cdot a_j=a_k)
\end{array}
\right.
\end{displaymath}
which has only finitely many solutions in integers \mbox{$x_1,\ldots,x_n$}.
\newpage
\begin{theorem}\label{the3}
The Conjecture is true for \mbox{$n \leq 3$}.
\end{theorem}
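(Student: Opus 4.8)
The plan is to reduce Theorem~\ref{the3} to a finite case analysis. By the discussion preceding the theorem, the Conjecture for $n=3$ asserts exactly that every triple in $T_3$ has all three coordinates of absolute value at most $2^{\textstyle 2^{3-1}}=16$, and a triple lies in $T_3$ precisely when the maximal system $S\subseteq E_3$ that it satisfies --- the set of those equations of $E_3$ that hold at it --- has only finitely many integer solutions. Hence it suffices to prove: if $(a_1,a_2,a_3)\in\Z^3$ and $\max(|a_1|,|a_2|,|a_3|)\geq 17$, then this maximal system has infinitely many integer solutions. By Lemma~\ref{lem2} we may first delete from $S$ every equation $x_i=1$ (each is merely traded for equations $x_i\cdot x_j=x_j$ that already hold, and the only point this adds to the solution set is $(0,0,0)$), so we may assume $S$ contains no equation $x_i=1$; phrased this way the claim is precisely the statement $\Lambda_3$. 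The statements $\Lambda_1$ and $\Lambda_2$ are immediate: if $|x_1|>2$ then $x_1\notin\{0,1\}$, so neither $x_1+x_1=x_1$ nor $x_1\cdot x_1=x_1$ holds and $y_1=2x_1$ witnesses $\Lambda_1$; and if $|x_1|=\max(|x_1|,|x_2|)\geq 5$ then the equations of $E_2$ that can hold are so few that rescaling $x_1$ always yields a witness of $\Lambda_2$, the bound $4=2^{\textstyle 2^{2-1}}$ being attained by $\{x_1+x_1=x_2,~x_1\cdot x_1=x_2\}$, whose integer solutions are $(0,0)$ and $(2,4)$.

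For $\Lambda_3$, fix $x_1,x_2,x_3\in\Z$. If $|x_1|\leq 16$, or if $|x_1|<|x_2|$ or $|x_1|<|x_3|$, then $y=(x_1,x_2,x_3)$ is a witness, so assume $|x_1|\geq 17$ and $|x_1|=\max(|x_1|,|x_2|,|x_3|)$. The first step is to record which equations of $E_3$ (none of the form $x_i=1$) can hold at such a triple. Since $|x_1|\geq 17$ while $|x_2|,|x_3|\leq|x_1|$: no equation $x_1+x_1=x_j$ or $x_1\cdot x_1=x_j$ holds; an equation $x_1\cdot x_j=x_k$ with $k\neq 1$ forces $|x_j|\leq 1$; an equation $x_j\cdot x_k=x_1$ forces either $2\leq|x_j|,|x_k|<|x_1|$ or one of $x_j,x_k$ equal to $\pm 1$ and the other to $\pm x_1$; an equation $x_1+x_j=x_k$ forces $x_j=0$ if $k=1$ and otherwise expresses $x_1$ as the difference of the other two coordinates; and the equations among $x_2,x_3$ alone, or producing $x_1$ by addition, are restricted similarly. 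One then runs through the finitely many resulting configurations.

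In each configuration the goal is to exhibit a solution of $S$ with some coordinate of absolute value larger than $|x_1|$; as $S$ is exactly the set of additive and multiplicative equations holding at $x$, such a solution is automatically a witness of $\Lambda_3$ at $x$. The structural fact that makes this possible is that in a three-variable system over $\Z$ with no equation $x_i=1$, a coordinate is confined to finitely many values only through a single equation $x_i\cdot x_i=x_i$ (pinning it to $\{0,1\}$) or $x_i+x_i=x_i$ (pinning it to $0$), or through a ``clash'' of two equations such as $\{x_i+x_i=x_k,~x_i\cdot x_i=x_k\}$ (pinning $x_i$ to $\{0,2\}$ and $x_k$ to $\{0,4\}$) or $\{x_i+x_j=x_k,~x_i\cdot x_j=x_k\}$ (forcing $(x_i-1)(x_j-1)=1$, hence $x_k\in\{0,4\}$), possibly followed by repeated squarings; with only three variables available, one checks that the largest value to which a coordinate of a finite system can be pinned is $16$ --- realised by $\{x_2+x_2=x_3,~x_2\cdot x_2=x_3,~x_3\cdot x_3=x_1\}$ and by the system of Observation~\ref{obs2}. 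Consequently, if $|x_1|\geq 17$ then $S$ cannot be finite; being infinite, $S$ has solutions of unbounded absolute value, and any one of them with maximum coordinate exceeding $|x_1|$ is the desired witness. This proves $\Lambda_3$, and with it Theorem~\ref{the3}.

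I expect the main obstacle to lie in the last two steps: confirming that the list of admissible configurations is exhaustive, and checking that in each of them the parametric family of solutions really satisfies \emph{every} equation holding at $x$ --- in particular the equations that involve the small coordinates $x_2,x_3$, which across the cases may be forced to equal $0$, $\pm 1$, $x_1/2$, or $\pm\sqrt{x_1}$. The remaining verifications are routine.
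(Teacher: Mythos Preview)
Your approach differs from the paper's: the paper simply \emph{enumerates} $T_1$, $T_2$, $T_3$ explicitly and observes that every listed tuple has coordinates of absolute value at most $16$; you instead attempt a \emph{structural} argument via $\Lambda_3$, classifying the possible ``finiteness mechanisms'' in a three-variable system and arguing they cannot pin a coordinate above $16$. The enumeration is less conceptual but has the virtue that it is a single finite, checkable claim; your route, if completed, would explain \emph{why} the bound is $16$ rather than merely certify it.

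However, there is a genuine gap. The sentence ``a coordinate is confined to finitely many values only through \ldots'' followed by a list introduced with ``such as'' is precisely the content of the theorem, and your list is not exhaustive. For instance, the system $\{x_1\cdot x_2=x_3,\ x_1+x_3=x_2\}$ forces $x_2(1-x_1)=x_1$, hence $x_1\in\{0,2\}$ and the only integer solutions are $(0,0,0)$ and $(2,-2,-4)$; this is a clash of one additive and one multiplicative equation with \emph{different} index patterns, not covered by any item on your list. There are several such mixed patterns in three variables, and each must be handled before the ``one checks'' can be cashed in. As it stands, the structural fact is asserted, not proved, so the argument is circular at its crucial step.

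A second, minor point: the detour through $\Lambda_3$ is unnecessary. You first reduce the Conjecture to ``if $\max|a_i|\geq 17$ then the maximal system at $a$ has infinitely many integer solutions'', which is already the contrapositive of the Conjecture for $n=3$. You then rephrase this as $\Lambda_3$, and to exhibit the witness $y$ you go back and prove the infinitude of the solution set anyway. Once you have established that the solution set is infinite you are done; extracting a specific $y$ adds nothing. If you want to keep the $\Lambda_3$ framing, the natural strategy is the opposite one: in each configuration, \emph{construct} a one-parameter family $y(t)$ directly rather than first arguing abstractly that the solution set is infinite.
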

\begin{proof}
$T_1=\{0,1\}$. $T_2$ consists of the pairs \mbox{$(0,0)$}, \mbox{$(1,1)$}, \mbox{$(-1,1)$},
\mbox{$(0,1)$}, \mbox{$(1,2)$}, \mbox{$(2,4)$} and their permutations. $T_3$ consists of the triples
\vskip 0.2truecm
\centerline{$(0,0,0)$, $(1,1,1)$,}
\vskip 0.2truecm
\centerline{$(-1,-1,1)$, $(0,0,1)$, $(1,1,-1)$, $(1,1,0)$, $(1,1,2)$, $(2,2,1)$, $(2,2,4)$, $(4,4,2)$,}
\vskip 0.2truecm
\centerline{$(1,-2,-1)$, $(1,-1,0)$, $(1,-1,2)$, $(1,0,2)$, $(1,2,3)$, $(1,2,4)$,}
\vskip 0.2truecm
\centerline{$(2,4,-2)$, $(2,4,0)$, $(2,4,6)$, $(2,4,8)$, $(2,4,16)$,}
\vskip 0.2truecm
\centerline{$(-4,-2,2)$, $(-2,-1,2)$, $(3,6,9)$, $(4,8,16)$}
\vskip 0.2truecm
\noindent
and their permutations.
\end{proof}

\noindent
Apoloniusz Tyszka\\
Technical Faculty\\
Hugo Ko\l{}\l{}\k{a}taj University\\
Balicka 116B, 30-149 Krak\'ow, Poland\\
E-mail address: \url{rttyszka@cyf-kr.edu.pl}

\begin{thebibliography}{11}

\bibitem{Browkin}
J.~Browkin,
\newblock
{\em On systems of Diophantine equations with a large number of solutions,}
\newblock
Colloq. Math. 121 (2010), no. 2, 195--201.

\bibitem{Cipu}
M.~Cipu,
\newblock {\em Small solutions to systems of polynomial equations with integer coefficients,}
\newblock An. St. Univ. Ovidius Constanta 19 (2011), no.~2, 89--100,
\newblock \url{http://www.emis.de/journals/ASUO/mathematics/pdf23/Cipu.pdf},
\newblock \url{http://www.anstuocmath.ro/mathematics/pdf23/Cipu.pdf}.

\bibitem{Davis}
M.~Davis, Yu.~Matiyasevich, J.~Robinson,
\newblock
{\em Hilbert's tenth problem. Diophantine equations: positive aspects of a negative solution,}
in: Mathematical developments arising from Hilbert problems (ed.~F.~E.~Browder),
\newblock
Proc. Sympos. Pure Math., vol. 28, Part 2, Amer. Math. Soc., 1976, 323--378;
\newblock
reprinted in: The collected works of Julia Robinson (ed.~S.~Feferman), Amer. Math. Soc., 1996, 269--324.

\bibitem{Kuijer}
L.~B.~Kuijer,
\newblock
{\em Creating a diophantine description of a r.e. set and on the complexity of such a description,}
\newblock
MSc thesis, Faculty of Mathematics and Natural Sciences, University of Groningen, 2010,
\newblock
\url{http://irs.ub.rug.nl/dbi/4b87adf513823}.

\bibitem{Matiyasevich1}
Yu.~Matiyasevich,
\newblock
{\em Hilbert's tenth problem,}
\newblock
MIT Press, Cambridge, MA, 1993.

\bibitem{Matiyasevich2}
Yu.~Matiyasevich,
\newblock
{\em Hilbert's tenth problem: what was done and what is to be done.}
\newblock
Hilbert's tenth problem: relations with arithmetic and algebraic geometry (Ghent, 1999), 1--47,
\newblock
Contemp. Math. 270, Amer. Math. Soc., Providence, RI, 2000.

\bibitem{Matiyasevich3}
Yu.~Matiyasevich,
\newblock
{\em Towards finite-fold Diophantine representations,}
\newblock
Zap. Nauchn. Sem. S.-Petersburg. Otdel. Mat. Inst. Steklov. (POMI) 377 (2010), 78--90,
\url{ftp://ftp.pdmi.ras.ru/pub/publicat/znsl/v377/p078.pdf}.

\bibitem{Narkiewicz}
W.~Narkiewicz,
\newblock {\em Number theory,}
\newblock World Scientific, Singapore, 1983.

\bibitem{Niven}
I.~Niven,
\newblock {\em Quadratic Diophantine equations in the rational and quadratic fields,}
\newblock Trans. Amer. Math. Soc. 52 (1942), 1--11.

\bibitem{Schinzel}
A.~Schinzel,
\newblock {\em Integer points on conics,}
\newblock
Comment. Math. Prace Mat. 16 (1972), 133--135, {\em Erratum} 17 (1973), 305.

\bibitem{Tyszka}
A.~Tyszka,
\newblock {\em A hypothetical upper bound for the solutions of a Diophantine equation with a finite number of solutions,}
\newblock \url{http://arxiv.org/abs/0901.2093}.

\end{thebibliography}
\end{document}